\theoremstyle{plain}
\newtheorem{theo+}           {Theorem}
\newtheorem{prop+}           {Proposition}
\newtheorem{coro+}           {Corollary}
\newtheorem{lemm+}           {Lemma}
\theoremstyle{definition}
\newtheorem{defi+}           {Definition}
\newtheorem{exam+}           {Example}
\newtheorem{problem}         {Problem}
\newtheorem*{question}       {Question}
\newtheorem*{ack}            {Acknowledgements}
\theoremstyle{remark}
\newtheorem{rema+}           {Remark}
\newenvironment{theorem}{\begin{theo+}}{\end{theo+}}
\newenvironment{lemma}{\begin{lemm+}}{\end{lemm+}}
\newenvironment{remark}{\begin{rema+}}{\end{rema+}}
\newenvironment{example}{\begin{exam+}}{\end{exam+}}
\newcommand {\cP} {\mathbb {CP}}
\newcommand {\bC} {\mathbb {C}}
\newcommand {\al} {\alpha}
\newcommand {\be} {\beta}
\newcommand {\la} {\lambda}
\newcommand {\eps} {\epsilon}
\newcommand \M {\mathcal {M}}
\newcommand \E {\mathcal E}
\newcommand \PP {\mathcal P}
\newcommand \LL {\mathcal L}
\newcommand \fL {\mathfrak L}
\newcommand \C {\mathcal C}
\newcommand \A {\mathcal A}
\newcommand \HP {\mathcal{HP}}
\begin{document}

\numberwithin{equation}{section}

\title[Eigenvalues of rectangular matrices]{On eigenvalues of rectangular
matrices}
\author[J.~Borcea]{Julius Borcea}
\address{Department of Mathematics, Stockholm University, SE-106 91 Stockholm,
   Sweden}
\email{julius@math.su.se}
\author[B.~Shapiro]{Boris Shapiro}
\address{Department of Mathematics, Stockholm University, SE-106 91 Stockholm,
   Sweden}
\email{shapiro@math.su.se}
\author[M.~Shapiro]{Michael Shapiro}
\address{Department of Mathematics, Michigan State University, East
Lansing, MI 48824-1027, USA}
\email{mshapiro\@math.msu.edu}

\subjclass[2000]{Primary 15A18; Secondary 15A22}
\keywords{Pencils of rectangular matrices, eigenvalue loci, resolution of
singularities, Pl\"ucker coordinates, determinantal representations, 
Heine-Stieltjes spectral problems}

\begin{abstract}
Given a $(k+1)$-tuple  $A, B_1,\ldots, B_k$  of $(m\times n)$-matrices  with $m\le n$ we call the set of all $k$-tuples  of complex numbers $\{\la_1,\ldots,\la_k\}$ such that the linear combination $A+\la_1B_1+\la_2B_2+\ldots+\la_kB_k$ has rank smaller than $m$ the {\it eigenvalue locus} of the latter pencil.  Motivated primarily by applications to multi-parameter generalizations of the Heine-Stieltjes spectral problem, see \cite {He} and  \cite {Vol}, we study   a number of  properties of the eigenvalue locus in the most important  case $k=n-m+1$.
\end{abstract}

\maketitle







\section*{Introduction and Main Results}  In recent years there appeared a  number of publications discussing the eigenvalues  of pencils of non-square matrices and their approximations, see, e.g., \cite{BEGM}, \cite{CG}, \cite{TW} and references therein.  But to the best of our knowledge the following natural problem either has been overlooked by specialists in linear algebra or is deeply buried in the (enormous) literature on this topic.

\begin{question}
Given a $(k+1)$-tuple of $(m\times n)$-matrices $A, B_1,\ldots, B_k$, $m\le n$, describe the set of all values of the parameters $\la_1,\ldots\la_k$ for which the rank of the linear combination $A+\la_1B_1+\ldots+\la_kB_k$ is less than $m$ or, in other words, when the linear system   $v*(A+\la_1B_1+\ldots+\la_kB_k)=0$¾  has a nontrivial left solution $0\neq v\in \bC^m$ which we call an {\em eigenvector}, where the symbol ``$*$'' denotes the usual matrix/vector multiplication.
\end{question}

Let  $\M(m,n)$, $m\le n$, be the linear space  of all $(m\times n)$-matrices with complex entries. In what follows we will consider $k$-tuples of $(m\times n)$-matrices $B_1,\ldots, B_k$ which are linearly independent in $\M(m,n)$ and denote their linear span by $\LL=\LL(B_1,\ldots,B_k)$.
Given a matrix pencil   $\PP=A+\LL$, where $A\in \M(m,n)$, let $\E_\PP\subset \PP$ be its {\it eigenvalue locus}, i.e., the set of matrices in $\PP$ whose rank is less than $m$.  Elements of $\E_\PP$ will be called {\it (generalized) eigenvalues.}  Denote by $\M^1\subset \M(m,n)$ the set of all $(m\times n)$-matrices with positive corank, i.e., whose rank is non-maximal.
Its codimension equals $n-m+1$ and its degree as an algebraic variety equals $\binom {n}{m-1}$,  see
\cite[Proposition 2.15]{BV}.  Consider the natural left-right action of the group $GL_m\times GL_n$ on $\M(m,n)$, i.e.,  $GL_m$ (respectively, $GL_n$) acts on $(m\times n)$-matrices by left (respectively, right) multiplication. This action on  $\M(m,n)$ has finitely many orbits, each orbit being the set of all matrices of a given (co)rank, see, e.g., \cite[Chap.~I \S 2]{AVG}. Note that by the well-known product formula for coranks the codimension of the set of matrices of rank $r$ equals $(m-r)(n-r)$.  Obviously, for any pencil $\PP$ one has that the eigenvalue locus coincides with $\E_\PP=\M^1\cap \PP$. Thus for a generic pencil $\PP$ of dimension $k$ the eigenvalue locus $\E_\PP$ is a subvariety of $\PP$ of codimension $n-m+1$ if $k\ge n-m+1$ and it is empty otherwise. The most interesting situation for applications occurs  when $k=n-m+1$, in which case $\E_\PP$ is generically a finite set. From now on we assume that $k=n-m+1$. Denoting as above  by $\LL$ the linear span of
$B_1,\ldots,B_{n-m+1}$ we say that $\LL$ is {\it transversal to $\M^1$} if the intersection $\LL\cap \M^1$ is finite and {\it non-transversal to $\M^1$}¾  otherwise.  Notice that due to the homogeneity of $\M^1$ any  $(n-m+1)$-dimensional linear subspace $\LL$ transversal to it   intersects $\M^1$ only at $0$ and that the multiplicity of this intersection at $0$  equals $\binom{n}{m-1}$. 

An important  and most natural example of such  a subspace $\LL$ is motivated by the Heine-Stieltjes theory \cite{He} and its higher order generalizations \cite{BBS}.  Denote by $J_s$, $s=1,\ldots,n-m+1$, the $(m\times n)$-matrix whose entries are given by $a_{i,j}=0$ if $i-j\neq s$ and $1$ otherwise. We call $J_s$ the {\it  $s$-th unit matrix} or the {\it  the $s$-th diagonal matrix}. Let us denote  the linear span  of $J_1,\ldots,J_{n-m+1}$ by $\fL$ and call $\fL$ the {\it standard diagonal subspace}. Note that $\fL$ is transversal to $\M^1$  since any matrix in $\fL$ different from $0$ has full rank, as one can easily check.

We start with the  following simple statement.

\begin{lemma}\label{lm:int}
If $\LL\subset \M(m,n)$ has dimension $(n-m+1)$ and is tranversal to $\M^1$ then for any matrix $A\in \M(m,n)$ the eigenvalue locus $\E_\PP$ of the pencil $\PP=A+\LL$ consists of exactly $\binom {n}{m-1}$ points counted with multiplicities.
\end{lemma}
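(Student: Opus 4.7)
The plan is to reduce the counting to a straightforward application of B\'ezout's theorem, after passing to a projective compactification. First I would set up the projective closure: let $\bar V\supset V=\M(m,n)$ be the projective completion, with hyperplane at infinity $V_\infty$, and write $\bar{\M}^1\subset\bar V$ for the projective closure of $\M^1$. Since projective closure preserves codimension and degree, $\bar{\M}^1$ has codimension $n-m+1$ and degree $\binom{n}{m-1}$, by the value cited from \cite{BV} in the excerpt. The affine pencil $\PP=A+\LL$ is an affine-linear subspace of dimension $n-m+1$; hence its projective closure $\bar\PP\subset\bar V$ is a projective linear subspace of the same dimension (so of degree $1$), and it satisfies $\bar\PP\cap V_\infty=\mathbb{P}(\LL)$.

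Next I would verify that the intersection $\bar\PP\cap\bar{\M}^1$ lies entirely in the affine chart $V$. Because the rank-deficiency condition defining $\M^1$ is invariant under nonzero scaling, $\M^1$ is a cone with vertex at the origin, and therefore $\bar{\M}^1\cap V_\infty=\mathbb{P}(\M^1)$. The transversality of $\LL$ to $\M^1$, together with the homogeneity of $\M^1$, forces $\LL\cap\M^1=\{0\}$ (as already observed in the excerpt). Projectivizing this identity gives $\mathbb{P}(\LL)\cap\mathbb{P}(\M^1)=\emptyset$ in $V_\infty$, so $\bar\PP$ and $\bar{\M}^1$ share no point at infinity. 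Any positive-dimensional component of $\bar\PP\cap\bar{\M}^1$ would necessarily meet the hyperplane $V_\infty$, which would contradict the previous sentence; hence $\E_\PP=\PP\cap\M^1=\bar\PP\cap\bar{\M}^1$ is finite and the intersection is proper.

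B\'ezout's theorem then yields
\[
\#\bigl(\bar\PP\cap\bar{\M}^1\bigr)=\deg\bar\PP\cdot\deg\bar{\M}^1=1\cdot\binom{n}{m-1},
\]
with points counted by intersection multiplicities, and transferring this back to the affine chart finishes the argument. The main obstacle is precisely the control at infinity: one must rule out that intersection points of $\PP$ and $\M^1$ escape to $V_\infty$ as $A$ is varied, and this is exactly what the transversality hypothesis (translated to $\mathbb{P}(\LL)\cap\mathbb{P}(\M^1)=\emptyset$) accomplishes. Once this is established, the counting is purely formal. As a sanity check, when $A=0$ the entire count $\binom{n}{m-1}$ must collapse into the single point $0$, which agrees with the intersection-multiplicity statement recorded in the paragraph just preceding the lemma.
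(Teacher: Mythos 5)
Your proof is correct, but it takes a genuinely different route from the paper's. The paper argues by degeneration: since $\M^1$ is a cone, $\E_{\eps A+\LL}=\eps\,\E_{A+\LL}$ for $\eps\in(0,1]$, so letting $\eps\to 0$ moves all eigenvalues of $\eps A$ to the single point $0=\LL\cap\M^1$, whose intersection multiplicity is $\binom{n}{m-1}$; conservation of total intersection multiplicity along the family then gives the count. You instead pass to the projective completion $\bar V\supset V=\M(m,n)$, observe that $\bar\PP\cap V_\infty=\mathbb{P}(\LL)$ and $\bar{\M}^1\cap V_\infty=\mathbb{P}(\M^1)$ are disjoint (a reformulation of $\LL\cap\M^1=\{0\}$), so $\bar\PP\cap\bar{\M}^1$ is confined to the affine chart and is therefore finite (a positive-dimensional component would be forced to meet the hyperplane $V_\infty$), and finally invoke B\'ezout, i.e.\ the degree-of-a-linear-section theorem, to get $\deg\bar\PP\cdot\deg\bar{\M}^1=\binom{n}{m-1}$. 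Both arguments hinge on the same geometric fact---transversality of $\LL$ at the origin controls what happens ``at infinity''---but the paper phrases it as a scaling degeneration while you phrase it as emptiness at $V_\infty$; your version makes the finiteness of $\E_\PP$ explicit and uses a standard projective-degree argument, whereas the paper's homotopy argument is slightly terser about that point. Both are valid.
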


\begin{remark}
Notice that since $\M^1$ is an incomplete intersection the same holds for the eigenvalue
locus  $\E_\PP$ of a generic pencil $\PP=A+\LL$, i.e., in order to find $\E_\PP$ for a given generic matrix $A$ and a given generic subspace $\LL$ one has to solve an overdetermined system of  determinantal  equations.  
\end{remark}

However, as was  essentially discovered  by Heine \cite{He}, the situation is different if one considers the standard diagonal subspace $\fL$ and any $A=(a_{i,j})\in \M(m,n)$ which is upper-triangular -- that is, such that $a_{i,j}=0$ whenever $i>j$ -- and has additionally distinct elements on the first main diagonal.

\begin{theorem}\label{th:Heine} For any upper-triangular matrix $A=(a_{i,j})\in \M(m,n)$ with all distinct  entries $a_{i,i}$ on the first main diagonal the
eigenvalue locus $\E_\PP$ of the pencil $\PP=A+\fL$, where $\fL$ is the standard diagonal subspace, is the union of $m$ complete intersections enumerated by the first component of the eigenvalue.
\end{theorem}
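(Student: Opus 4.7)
The plan is to stratify $\E_\PP$ according to the position of the first non-zero coordinate of the associated left null vector and, for each stratum, to produce an explicit system of $n-m+1$ polynomial equations in $\bC^{n-m+1}$ cutting it out.

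I would begin by analyzing the shape of $M(\la):=A+\la_1 J_1+\ldots+\la_{n-m+1}J_{n-m+1}$. Because $A$ is upper-triangular and each $J_s$ places its $1$'s on a single full-length diagonal at or above the main diagonal, $M(\la)$ is upper-triangular with diagonal $(a_{j,j}+\la_1)_{j=1}^{m}$. If $v\in\bC^m$ is a left null vector of $M(\la)$ whose first non-zero entry is $v_i$, then the $c$-th coordinate of $v\cdot M(\la)$ for $c\le i$ collapses to $v_c M_{c,c}=0$, and taking $c=i$ forces $\la_1=-a_{i,i}$. The distinctness of $a_{1,1},\ldots,a_{m,m}$ makes the affine hyperplanes $\{\la_1+a_{i,i}=0\}$ pairwise disjoint, so setting $\E_\PP^{(i)}:=\E_\PP\cap\{\la_1+a_{i,i}=0\}$ yields a disjoint decomposition $\E_\PP=\bigsqcup_{i=1}^{m}\E_\PP^{(i)}$ labelled by the value of $\la_1$.

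Next I would work inside a fixed stratum. Fix $i$ and set $\la_1=-a_{i,i}$. Columns $c<i$ force $v_c=0$ via the non-vanishing pivot $a_{c,c}-a_{i,i}$, column $c=i$ is vacuous, and I normalise $v_i=1$. For each $s=1,\ldots,m-i$ the equation from column $i+s$ has the shape
\[
v_{i+s}(a_{i+s,i+s}-a_{i,i})+F_{s}(v_{i},\ldots,v_{i+s-1},\la_2,\ldots,\la_{n-m+1})=0
\]
with non-zero constant pivot, so it determines $v_{i+s}$ uniquely as a polynomial in $\la_2,\ldots,\la_{n-m+1}$. Substituting these polynomials into the equations coming from the remaining columns $c=m+1,\ldots,n$ produces exactly $n-m$ polynomial equations purely in $\la_2,\ldots,\la_{n-m+1}$. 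Combined with the hyperplane equation $\la_1+a_{i,i}=0$, this is a system of $n-m+1$ polynomial equations in the $n-m+1$ coordinates of $\bC^{n-m+1}$ whose zero set is $\E_\PP^{(i)}$. Zero-dimensionality of that zero set is inherited from $\E_\PP^{(i)}\subseteq\E_\PP$ via Lemma~\ref{lm:int}, so the expected codimension is attained and $\E_\PP^{(i)}$ is a complete intersection. This gives the required decomposition of $\E_\PP$ into $m$ complete intersections enumerated by the value of $\la_1$.

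The step I expect to require the most care is verifying that this elimination really captures $\E_\PP^{(i)}$, without spurious or missed solutions. The normalisation $v_i=1$ is legitimate: under $\la_1=-a_{i,i}$ any left null vector must satisfy $v_i\ne 0$, because otherwise the triangular argument applied to columns $i+1,\ldots,m$ would successively force $v_j=0$ for every $j>i$ and hence $v\equiv 0$, contradicting $v\ne 0$. Consequently the projection from the incidence locus in $(\la,v_{i+1},\ldots,v_m)$-space to $\la$-space is a bijection onto $\E_\PP^{(i)}$, and the complete-intersection structure transfers to the image.
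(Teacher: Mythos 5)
Your proposal is correct and follows essentially the same route as the paper's proof: observe that the diagonal structure forces $\la_1=-a_{i,i}$ for some $i$, stratify $\E_\PP$ by the value of $\la_1$, use the triangular shape of the first $m$ columns to recursively eliminate the null vector coordinates (the paper's $k_j$, your $v_j$), and obtain a system of $n-m$ polynomial equations in $\la_2,\ldots,\la_{n-m+1}$ for each stratum, with zero-dimensionality following from Lemma~\ref{lm:int}. The only cosmetic difference is that the paper works the elimination out explicitly for $i=1$ and reduces the other strata to a smaller matrix by deleting initial rows and columns, whereas you carry out the elimination uniformly in $i$; the underlying calculation is the same.
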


\begin{remark}
An explicit defining  system of $(n-m)$ algebraic equations in $(n-m)$ variables for each such complete intersection is presented in the proof of Theorem \ref{th:Heine}, see \S \ref{s1} below.
\end{remark}

Given $\LL$ as above consider the natural projection map $\pi_\LL:\M(m,n)\to \LL^\perp$ along $\LL$,  where  $\LL^\perp=\M(m,n)/\LL$. Noticing that $\dim \M^1=\dim \LL^\perp$ we define the {\it set of critical values} of $\pi_\LL$ to be 
the set $\C_\LL$ of all points in $\M^1$ where $\pi_\LL$ is not a local diffeomorphism of
$\M^1$ on its image  $\pi_\LL(\M^1)$. In other words, $\C_\LL$ is the set of all points $p\in \M^1$ such that the sum of $\LL$  and the tangent space to $\M^1$ at $p$  does not coincide with the whole $\M(m,n)$. In particular,  independently of $\LL$ the critical value set $\C_\LL$ always includes the set $\M^2$ of all $(m\times n)$-matrices with corank at least $2$.

Recall that $\M^1\subset \M(m,n)$ has the  classical small resolution of singularities $\widetilde{\M^1}\subset \M(m,n)\times \cP^{m-1}$. Here $\widetilde{\M^1}$ consists of all pairs $\left(A,pker(A)\right)$, where $A\in \M^1$ and $pker(A)$ is the projectivization of the left kernel of $A$.  Using this construction one can parameterize a Zariski open subset of $\M^1$ as follows. Consider the product
$P(m,n)= \M(m-1,n)\times \bC^{m-1}$. Take the map $\nu: P(m,n)\to \M^1\subset \M(m,n)$ sending
a pair $(\A; k_1,\ldots,k_{m-1})$ to the matrix $A\in \M(m,n)$ obtained by appending to $\A$ the last row such that its sum with  the linear combination with the coefficients $(k_1,\ldots,k_{m-1})$ of the respective rows of $\A$ vanishes.

The  main result of this paper is a simple determinantal representation of $\C_\LL$ in the above coordinates.

\begin{theorem} \label{th:determ} Let $\LL$ be any  $(n-m+1)$-dimensional linear subspace  in $\M(m,n)$ transversal to $\M^1$ and denote by  $L_1,\ldots,L_{n-m+1}$ some  basis of  $\LL$. Then  in the coordinates of $P(m,n)$ the critical value set  $\C_\LL$¾ is given the determinantal equation
\begin{equation}\label{eq:1}
 \det\begin{pmatrix} \A\\
                                V_1\\
                                \vdots\\
                                V_{n-m+1}
 \end{pmatrix}=0.\end{equation}
 Here $\A$ is a $(m-1,n)$-matrix with undetermined entries and $V_j$, $j=1,\ldots,n-m+1$, are row vectors given by $V_j=\mathbf{\kappa}* L_j$, where $\mathbf{\kappa}=(k_1,\ldots,k_{m})$.
\end{theorem}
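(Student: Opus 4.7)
The plan is to reduce the critical-value condition $\LL + T_p\M^1 \ne \M(m,n)$ to a linear-algebra identity at each $p \in \M^1$, and then rewrite that identity in the coordinates provided by the chart $\nu: P(m,n) \to \M^1$. First, I would identify $T_A\M^1$ at a smooth point $A \in \M^1 \setminus \M^2$, i.e., one of corank exactly one with left kernel spanned by some $\kappa \in \bC^m$. A first-order deformation argument --- perturb $A$ by $\epsilon X$ and the kernel direction by $\epsilon w$, expand $(\kappa+\epsilon w)(A+\epsilon X)=O(\epsilon^2)$, and use $\kappa A=0$ --- yields
\[
T_A \M^1 = \{X \in \M(m,n) : \kappa X \in \mathrm{rowsp}(A)\},
\]
a subspace of codimension $n-m+1 = \dim\LL$, matching the codimension of $\M^1$.

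Next, I translate the critical condition $\LL + T_A\M^1 \ne \M(m,n)$ into a statement about vectors in $\bC^n$. The quotient $\M(m,n)/T_A\M^1$ is canonically identified with $\bC^n/\mathrm{rowsp}(A)$ via $X \mapsto \kappa X \bmod \mathrm{rowsp}(A)$, so $A \in \C_\LL$ if and only if the $n-m+1$ row vectors $\kappa L_1,\ldots,\kappa L_{n-m+1}$, together with a basis of $\mathrm{rowsp}(A)$, fail to span $\bC^n$. I now pass to the chart $P(m,n)$: for $A = \nu(\A; k_1,\ldots,k_{m-1})$ the left kernel vector is $\kappa = (k_1,\ldots,k_{m-1},1)$ (matching the theorem's notation with $k_m = 1$), and the last row of $A$ is by construction in the row span of $\A$, so $\mathrm{rowsp}(A) = \mathrm{rowsp}(\A)$. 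Hence $A \in \C_\LL$ precisely when the $n \times n$ matrix obtained by stacking the rows of $\A$ above the vectors $V_j = \kappa \cdot L_j$ is singular, which is exactly equation~(\ref{eq:1}).

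Finally, I check the singular stratum. If $\A$ has rank $\le m-2$, then $A \in \M^2$; its rows are already linearly dependent, so the determinant in~(\ref{eq:1}) vanishes automatically, consistently with the fact that $\M^2 \subset \C_\LL$ regardless of $\LL$. \textbf{The main subtlety} is ensuring that the identification $\M(m,n)/T_A\M^1 \cong \bC^n/\mathrm{rowsp}(A)$ is well-defined --- it relies on the uniqueness of $\kappa$ up to scalar on $\M^1 \setminus \M^2$. A secondary issue, not needed for the set-theoretic statement of the theorem, is to verify that~(\ref{eq:1}) cuts out $\C_\LL$ with the correct scheme structure along $\nu^{-1}(\M^2)$; this would presumably follow from properties of the small resolution $\widetilde{\M^1}$, but the set-theoretic version is all that the theorem as stated requires.
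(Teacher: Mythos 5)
Your proof is correct and reaches the conclusion by a genuinely different route from the paper's. The paper also begins from the transversality criterion that $\C_\LL$ is the locus where $T_M\M^1+\LL\neq\M(m,n)$, but it then exhibits a spanning set of $T_M\M^1$ as generators of the $GL_m\times GL_n$-orbit (elementary row and column operations), flattens these together with $L_1,\ldots,L_{n-m+1}$ into an $(mn\times mn)$ block matrix $\mathfrak D$, and reduces $\det\mathfrak D$ by block row operations to $\Delta^{m-1}$ times the $n\times n$ determinant in (\ref{eq:1}), where $\Delta$ is a leading minor of $\A$ that is nonzero in the chosen chart. You avoid that bulky computation altogether by characterizing the tangent space implicitly as $T_A\M^1=\{X:\kappa X\in\mathrm{rowsp}(A)\}$, noting that $X\mapsto\kappa X$ descends to an isomorphism $\M(m,n)/T_A\M^1\cong\bC^n/\mathrm{rowsp}(A)$, and then translating the failure of transversality directly into the singularity of the $n\times n$ matrix of (\ref{eq:1}), using that $\mathrm{rowsp}(A)=\mathrm{rowsp}(\A)$ and $\kappa=(k_1,\ldots,k_{m-1},1)$ in the chart $\nu$. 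Your version is shorter, more conceptual, and makes the inclusion $\M^2\subset\C_\LL$ visible in the formula at a glance (the rows of $\A$ become dependent exactly over $\nu^{-1}(\M^2)$). Both arguments describe $\C_\LL$ only set-theoretically and only on the smooth stratum; your explicit flag of this limitation is accurate and is also implicit in the paper's phrase ``at a sufficiently generic matrix $M$.''
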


\begin{remark}
If one expands equation (\ref{eq:1}) in the variables $(k_1,\ldots,k_{n-m+1})$ then the coefficient of each monomial in these variables is a linear combination of the maximal minors of $\A$ 
 (i.e., the Pl\"ucker coordinates) with complex coefficients depending only on the choice of $\LL$.
Moreover, the above equation contains a lot of information of geometric nature.
\end{remark}

Our next result shows that for the standard diagonal subspace $\fL$ the determinantal equation in Theorem \ref{th:determ} can be made quite a bit more explicit, which is particularly convenient from a computational viewpoint. We need first some additional notation. If $s\ge 1$ is an integer and $1\le r\le s$ let $Q_{r,s}$ be the set of all strictly increasing sequences of $r$ integers chosen from $1,\ldots,s$. Note in particular that $Q_{s,s}$ consists of a single sequence, namely $\{1,\ldots,s\}$. For $\al=(\al_1,\ldots,\al_r)\in Q_{r,s}$ set $\rho(\al)=\sum_{j=1}^{r}\al_j$. Given $A\in\M(m,n)$, $1\le k\le m$, $1\le l\le n$, $\al\in Q_{k,m}$ and $\be\in Q_{l,n}$ denote by $A[\al|\be]\in\M(k,l)$ the submatrix of $A$ lying in rows $\al$ and columns $\be$.
Let $\HP(i,d)$ denote the complex space of all homogeneous polynomials in $i$ variables of degree $d$ and define the $d\times (i+d-1)$ matrix
$$T_{i,d}=T_{i,d}(k_1,\ldots,k_i)=k_1J_1+\ldots+k_iJ_i,$$
where $J_j$, $1\le j\le i$, is as before the $j$-th diagonal $d\times (i+d-1)$
matrix and $k_1,\ldots,k_i$ are indeterminates. We will also need a result that may be of independent interest, namely the following lemma.

\begin{lemma}\label{l:top}
In the above notation, the $\binom{i+d-1}{d}$ polynomials in $k_1,\ldots,k_i$ given by the determinants
$$\Big|T_{i,d}[\al|\be]\Big|,\quad \al\in Q_{d,d},\,\be\in Q_{d,i+d-1},$$
build a basis of $\HP(i,d)$.
\end{lemma}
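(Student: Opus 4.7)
Since $|Q_{d,d}|\cdot|Q_{d,i+d-1}|=\binom{i+d-1}{d}=\dim\HP(i,d)$, the lemma reduces to proving that the $\binom{i+d-1}{d}$ minors $M_\be:=|T_{i,d}[\{1,\ldots,d\}|\be]|$, $\be\in Q_{d,i+d-1}$, are linearly independent in $\HP(i,d)$. The plan is to fix a total order on degree-$d$ monomials in $k_1,\ldots,k_i$ under which $\{M_\be\}$ forms an upper-triangular system with unit diagonal, so that its transition matrix to the monomial basis is invertible.

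Expanding the determinant gives $M_\be=\sum_{\sigma\in S_d}\mathrm{sgn}(\sigma)\prod_{r=1}^{d}k_{\be_{\sigma(r)}-r+1}$, where we set $k_j:=0$ for $j\notin\{1,\ldots,i\}$. Put $b_r(\be):=\be_r-r+1$. Since $\be\in Q_{d,i+d-1}$ is strictly increasing with entries in $\{1,\ldots,i+d-1\}$, the tuple $(b_1(\be),\ldots,b_d(\be))$ is weakly increasing with entries in $\{1,\ldots,i\}$, and $\be\mapsto(b_1(\be),\ldots,b_d(\be))$ is the classical bijection between $Q_{d,i+d-1}$ and degree-$d$ monomials $k^{b(\be)}:=\prod_r k_{b_r(\be)}$ in $i$ variables; the $\sigma=\mathrm{id}$ contribution to $M_\be$ is exactly $k^{b(\be)}$. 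Order degree-$d$ monomials lexicographically on their canonical weakly-increasing exponent-index tuple $(j_1\leq\cdots\leq j_d)$.

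The only nontrivial step, and the main obstacle, is the following \emph{triangularity claim}: for every $\sigma\neq\mathrm{id}$ producing a nonzero term, the sorted exponent multiset of $\prod_r k_{\be_{\sigma(r)}-r+1}$ is lex-strictly smaller than $(b_1(\be),\ldots,b_d(\be))$. I would prove this by induction on $d$. If $\sigma(1)\neq 1$, let $r_0:=\sigma^{-1}(1)>1$; the factor at position $r_0$ equals $k_{\be_1-r_0+1}$ with index $\be_1-r_0+1<\be_1=b_1(\be)$, so after sorting the first entry drops strictly below $b_1(\be)$ and the claim follows at once. If instead $\sigma(1)=1$, the first factor matches $k_{b_1(\be)}$ and the remaining factors correspond, under the row/column reindexing $r\mapsto r-1$, $c\mapsto c-1$, to the analogous $\sigma'$-term of a minor of $T_{i,d-1}$ with column set $\be':=(\be_2-1,\ldots,\be_d-1)\in Q_{d-1,i+d-2}$; a direct substitution shows $b'_s(\be')=b_{s+1}(\be)$, so the induction hypothesis forces the sorted tail to be lex-less than $(b_2(\be),\ldots,b_d(\be))$ unless $\sigma=\mathrm{id}$, and a short case split on whether the minimum of the tail lies below $b_1(\be)$ concludes. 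Granted this claim, each $M_\be$ has distinct lex-leading monomial $k^{b(\be)}$ with coefficient $+1$, the map $\be\mapsto b(\be)$ exhausts the monomial basis of $\HP(i,d)$, and so the transition matrix from $\{M_\be\}$ to the monomial basis is triangular with unit diagonal; hence the $M_\be$ form a basis of $\HP(i,d)$.
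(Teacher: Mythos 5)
Your proof is correct, but it takes a genuinely different route from the paper's. The paper argues by induction on $t=i+d$: after splitting the alleged linear relation into minors that do not use column $i+d-1$ (which, after setting $k_i=0$, become minors of $T_{i-1,d}$) and minors that do use it (which factor as $k_i$ times minors of $T_{i,d-1}$), it applies the inductive hypothesis twice to conclude that all coefficients vanish. You instead prove a Gr\"obner-type unitriangularity statement: under the lexicographic order on sorted exponent tuples, the lex-leading term of the minor indexed by $\be$ is exactly the monomial $k^{b(\be)}=\prod_r k_{\be_r-r+1}$ with coefficient $+1$, and $\be\mapsto b(\be)$ is the standard bijection $Q_{d,i+d-1}\to$ degree-$d$ monomials; your inner induction on $d$ (case split on $\sigma(1)=1$ or not) correctly establishes this, and the rest follows. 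The paper's route is a shorter recursive elimination and avoids any leading-term bookkeeping; your route is more constructive, since it exhibits the transition matrix to the monomial basis as unitriangular in a concrete term order, which is exactly the kind of explicit information alluded to in the paper's remark following Lemma~\ref{l:top}. One small point worth writing out in a final version is the observation that in case $\sigma(1)=1$ with $\sigma\neq\mathrm{id}$ the reduced permutation $\sigma'$ is automatically different from the identity, so the inductive hypothesis on the tail applies; you gesture at this, and it is fine, but it is the pivot of the argument.
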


\begin{remark}
The usual determinant expansion formula provides an explicit expression (albeit tedious and not really needed for the present purposes) for the
$\binom{i+d-1}{d}\times\binom{i+d-1}{d}$ matrix relating the standard monomial basis of $\HP(i,d)$ to the one constructed in Lemma \ref{l:top}.
\end{remark}

\begin{theorem}\label{th:sds}
Let $\A\in\M(m-1,n)$ be as in Theorem \ref{th:determ}. The homogeneous defining polynomial of $\C_\fL$ with respect to the standard diagonal subspace $\fL$ is given by
$$\sum_{\be\in Q_{m-1,n}}(-1)^{\rho(\be)}\Big|\A\big[\{1,\ldots,m-1\}|\be\big]\Big|\cdot
\Big|T_{m,n-m+1}\big[\{1,\ldots,n-m+1\}|\{1,\ldots,n\}\setminus\be\big]\Big|.$$
\end{theorem}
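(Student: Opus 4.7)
The plan is to specialize Theorem~\ref{th:determ} to $\LL=\fL$ with basis $L_j=J_j$, recognize the lower block of the matrix appearing in \eqref{eq:1} as $T_{m,n-m+1}$, and then apply the standard Laplace expansion along the first $m-1$ rows.

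To set this up, I would first compute the row vectors $V_j=\kappa*J_j$ for $\kappa=(k_1,\ldots,k_m)$ and $j=1,\ldots,n-m+1$. Since $J_j$ is the $j$-th diagonal $m\times n$ matrix, a direct entry-wise calculation shows that the $(j,l)$-entry of the $(n-m+1)\times n$ matrix obtained by stacking $V_1,\ldots,V_{n-m+1}$ agrees with the $(j,l)$-entry of $T_{m,n-m+1}(k_1,\ldots,k_m)=\sum_{i=1}^m k_i J_i$, where now $J_i$ denotes the $i$-th diagonal $(n-m+1)\times n$ matrix. This is the only content-bearing step in the argument: both parametrizations encode the convolution of the $k_i$'s with the same diagonal support pattern, read along the two different axes of the ambient grid, and they produce the same matrix.

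Once the lower block in \eqref{eq:1} has been identified with $T_{m,n-m+1}$, it remains to expand the resulting $n\times n$ determinant by Laplace along the first $m-1$ rows. This yields
\[
\sum_{\be\in Q_{m-1,n}}(-1)^{\rho(\be)-m(m-1)/2}\bigl|\A[\{1,\ldots,m-1\}|\be]\bigr|\cdot\bigl|T_{m,n-m+1}[\{1,\ldots,n-m+1\}|\{1,\ldots,n\}\setminus\be]\bigr|.
\]
Since a defining polynomial is only determined up to a nonzero scalar, the global sign $(-1)^{m(m-1)/2}$ may be absorbed, producing exactly the formula in the statement.

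The only real obstacle is bookkeeping: applying the convention for ``the $j$-th diagonal matrix'' consistently in both the $m\times n$ and $(n-m+1)\times n$ formats, and tracking the Laplace sign correctly. Lemma~\ref{l:top} is not logically required here, but it provides a useful sanity check by asserting that the minors of $T_{m,n-m+1}$ appearing as coefficients of the Pl\"ucker coordinates of $\A$ are linearly independent in $\HP(m,n-m+1)$, which in turn confirms that the formula produces a genuinely nonzero polynomial.
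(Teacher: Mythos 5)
Your proposal is correct and follows essentially the same route as the paper: specialize the determinant in Theorem~\ref{th:determ} to $\LL=\fL$, recognize the stacked rows $V_1,\ldots,V_{n-m+1}$ as $T_{m,n-m+1}(k_1,\ldots,k_m)$, and apply the generalized Laplace expansion along the first $m-1$ rows, absorbing the harmless global sign $(-1)^{m(m-1)/2}$. You are in fact slightly more explicit than the paper on the one non-trivial bookkeeping step (that $\kappa*J_j$ read as a row in position $j$ reproduces the $j$-th row of $T_{m,n-m+1}$), and your observation that Lemma~\ref{l:top} is not logically needed here but serves as a non-vanishing check is a correct and useful remark.
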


\begin{example} For $m=2$ the homogeneous defining polynomial of $\C_\fL$ with
respect to the standard diagonal subspace $\fL$ is given by 
$$a_nk_1^n+a_{n-1}k_1^{n-1}k_2+a_{n-2}k_1^{n-2}k_2^2+\ldots+a_1k_1k_2^{n-1},$$
where $a_j=a_{1,j}$, $j=1,\ldots,n$.
\end{example}

\begin {example} For the standard diagonal  subspace $\fL$ in the case of  $\M(3,4)$ the homogeneous defining polynomial of $\C_\fL$ may be written as
\begin{multline*}
\begin{vmatrix} a_{1,1}&a_{1,2}&a_{1,3}&a_{1,4}\\
                                a_{2,1}&a_{2,2}&a_{2,3}&a_{2,4}\\
                                k_1&k_2&k_3&0\\
                                0&k_1&k_2&k_3
 \end{vmatrix}=\Delta_{3,4}k_1^2+\Delta_{1,4}k_2^2+\Delta_{1,2}k_3^2-\Delta_{2,4}k_1k_2\\
+(\Delta_{2,3}-\Delta_{1,4})k_1k_3-\Delta_{1,3}k_2k_3,
\end{multline*}
 where $\Delta_{i,j}$ is the $(2\times 2)$-determinant of the upper part $\A$ including the $i$-th and $j$-th columns.
\end{example}


%
%


\begin{remark}
The multiplicity of an eigenvalue $A\in \E_\PP$ can be  expressed in terms of the
dimension of the corresponding local algebra. More exactly,  for an $(m\times n)$-matrix $A$ we
define the ideal $I_A$ in the algebra
$\bC[[t_1,\dots,t_k]]$ of formal power series as the ideal generated
by all Pl\"ucker polynomials $\Delta_{i_1,\dots,i_m}(A+\sum_{l=1}^m
t_lB_l)$, where $\Delta_{i_1,\dots,i_m}(X_{m\times n})$ is the 
determinant of the $(m\times m)$ submatrix of X formed by the columns 
with the indices $i_1, i_2,\dots,i_m$. Now define the local algebra $\A_{loc}$ as the quotient algebra
$A_{loc}=\bC[[t_1,\dots,t_k]]/I_A$. Then the multiplicity of the eigenvalue $A$ in the pencil $A+\LL$
equals $\dim_\bC A_{loc}$.
\end{remark}

The main result of this note (Theorem~\ref{th:determ}) gives a simple explicit determinantal formula for the critical value set $\C_\LL$ (in coordinates on the resolution of singularities $P(m,n)$). Its inverse image $\pi_\LL^{-1}(\C_\LL)$ is an important hypersurface  consisting of all matrices in $\M(m,n)$ having a multiple eigenvalue. However, the problem of obtaining explicitly its defining polynomial in matrix entries seems to be quite delicate in general. As an illustration, let us show how this can be done in the simplest case of $(2\times 3)$-matrices.    

\begin{example}[Discriminant equation]
For $m=2, n=3$ we will write the
defining  equation for the hypersurface $\pi_\LL^{-1}(\C_\LL)$  of matrices with multiple
eigenvalues in the space $\M(2,3)$ itself.

For any pair of positive integers $m<n$ consider the extended matrix space $\M(m,n)\times \bC
P^{m-1}\times \bC^{n-m+1}$,
 where the $m$-tuple of homogeneous coordinates in $\bC^m$ is denoted by
$\kappa=(\kappa_1:\dots:\kappa_m)$ and the coordinates  in
$\bC^{n-m+1}$ are denoted by
$\lambda=(\lambda_1,\dots,\lambda_{n-m+1})$.

Given a matrix $M\in \M(m,n)$ we will write a system of polynomial
equations for 
$A+\sum_{i=1}^{n-m+1}\lambda_i J_i$, 
\[\sum_{s=1}^m \kappa_s [A+\sum_{i=1}^{n-m+1}\lambda_i
J_i]_{s*}=0,\]
expressing the fact that $\lambda$ is an eigenvalue of
$M$ while the $\kappa_i$'s are the corresponding coefficients of a
linear dependence between the rows of the matrix.

Using resultants we can  get rid of the additional variables $\lambda$ and
$\kappa$. This elimination  leads to the defining equation for the hypersurface in question.

Namely,  consider a $(2\times 3)$-matrix
$A=\left(%
\begin{array}{ccc}
  a_{11} & a_{12} & a_{13} \\
  a_{21} & a_{22} & a_{23} \\
\end{array}%
\right)$ and let as before 
$J_1=\left(%
\begin{array}{ccc}
  1 & 0 & 0 \\
  0 & 1 & 0 \\
\end{array}\right)$
and $J_2=\left(%
\begin{array}{ccc}
  0 & 1 & 0 \\
  0 & 0 & 1 \\
\end{array}\right)$.
A generic element of the pencil $\PP$ is thus given by 
$$A(\lambda_1,\lambda_2):=A-\lambda_1 J_1-\lambda_2 J_2=\left(%
\begin{array}{ccc}
  a_{11}-\lambda_1 & a_{12}-\lambda_2 & a_{13} \\
  a_{21} & a_{22}-\lambda_1 & a_{23}-\lambda_2 \\
\end{array}%
\right).$$

For a generic matrix $A$ the condition that the rank of $A(\lambda_1,\lambda_2)$ is less than $2$ translates into two equations: the minor consisting of the second and third columns
vanishes, and the minor consisting of the first and third
columns vanishes.
These equations have the form
\begin{eqnarray}\label{eq:minors}
  (a_{12}-\lambda_2)(a_{23}-\lambda_2)-a_{13} (a_{22}-\lambda_1) &=& 0 \label{eq:minor23}\\
  (a_{11}-\lambda_1)(a_{23}-\lambda_2)-a_{13} a_{21} &=&
  0 \label{eq:minor13}
\end{eqnarray}
Note that
\begin{equation}\label{eq:kappa}
\kappa:=\kappa_1=\frac{a_{23}-\lambda_2}{a_{13}}.
\end{equation}
Moreover, from the determinantal equation of Theorem~\ref{th:determ} we obtain a third equation. 
Substituting expression~\eqref{eq:kappa} into the latter gives the equation
\begin{equation}\label{eq:critical}
a_{13}^2 a_{11}-a_{13}^2\lambda_1+a_{23} a_{13}
a_{12}- 3 a_{23} a_{13} \lambda_2+ a_{13}
a_{23}^2-\lambda_2 a_{13} a_{12}+2
a_{13}\lambda_2^2=0.  
\end{equation}

Now equation~(\ref{eq:minor23}) has bidegree $(1,2)$ with respect to
$\lambda_1, \lambda_2$. Analogously,  (\ref{eq:minor13}) has bidegree
$(1,1)$ and (\ref{eq:critical}) has bidegree   $(1,2)$ with respect to the same variables. Clearly, 
any  solution $s$ of the system of equations consisting of (\ref{eq:minor23}), (\ref{eq:minor13}) and 
(\ref{eq:critical}) annihilates  any polynomial in the ideal generated by these
three equations. In particular, the following eight equations have  $s$ as a 
common solution:
(\ref{eq:minor23}), (\ref{eq:minor23}) multiplied by
$\lambda_2$, (\ref{eq:minor23}) multiplied by $\lambda_2^2$,
(\ref{eq:minor13}), (\ref{eq:minor13}) multiplied by
$\lambda_2$, (\ref{eq:critical}), (\ref{eq:critical}) multiplied
by $\lambda_2$, (\ref{eq:critical}) multiplied by $\lambda_2^2$. 
Therefore, the $(8\times 8)$ determinant  
\begin{equation*}
\begin{split}
&D=\\
&\det\left(%
\begin{array}{cccccccc}
\Delta_{23} & a_{13} & -a_{12}-a_{23} & 0 & 1 & 0 & 0 & 0 \\
 0 & 0 & \Delta_{23} & a_{13} & -a_{12}-a_{23} & 0 & 1 & 0 \\
 0 & 0 & 0 & 0 & \Delta_{23} & a_{13} & -a_{12}-a_{23} & 1 \\
 \Delta_{13} & -a_{23} & -a_{11} & 1 & 0 & 0 & 0 & 0 \\
0 & 0 & \Delta_{13} & -a_{23} & -a_{11} & 1 & 0 & 0 \\
\delta  & -a_{13}^2 & \sigma & 0 & 2 a_{13} & 0 & 0 & 0 \\
0 & 0 & \delta  & -a_{13}^2 & \sigma & 0 & 2 a_{13} & 0\\
0 & 0 & 0 & 0 & \delta & -a_{13}^2 & \sigma & 2 a_{13} \\
\end{array}%
\right)
\end{split}
\end{equation*}
vanishes when
(\ref{eq:minor23}), (\ref{eq:minor13}) and
(\ref{eq:critical}) have a common root.
Here we use the following notation: $\delta=a_{13}^2 a_{11}+a_{23} a_{13} a_{12}+a_{13} a_{23}^2$,
$\Delta_{23}=a_{12} a_{23}-a_{13} a_{22}$, $\Delta_{13}=a_{11}
a_{23}-a_{21} a_{13}$ and $\sigma=-a_{13} a_{12}-3 a_{13} a_{23}$.

This observation implies that the required defining polynomial for $\pi_\LL^{-1}(\C_\LL)$ is the product of some (but not necessarily all) irreducible factors of the polynomial $D$.
Factorizing $D$ we obtain $D=a_{11}^6 D_0$, where
\begin{equation*}
\begin{split}
D_0=& -12a_{13} a_{22}^2 a_{11}+a_{22}^2 a_{12}^2+12 a_{13} a_{22}
a_{11}^2+ a_{11}^2 a_{23}^2+4 a_{21}a_{12}^3\\
&-4 a_{21}a_{23}^3+a_{11}^2a_{12}^2
+12 a_{12}a_{23}^2a_{21} -12 a_{12}^2 a_{23} a_{21}-2 a_{12} a_{23}
a_{22}^2\\
&- 2 a_{12} a_{23} a_{11}^2-2
a_{22} a_{11} a_{23}^2-18 a_{13} a_{22} a_{23} a_{21}
- 2 a_{22}a_{11} a_{12}^2\\
&+18 a_{13} a_{22} a_{21} a_{12}
+18 a_{11}a_{23} a_{13} a_{21}- 18 a_{21} a_{13} a_{12} a_{11}\\
&+4 a_{13}
a_{22}^3-27 a_{21}^2 a_{13}^2-4 a_{13} a_{11}^3+ 4 a_{12} a_{23}
a_{22} a_{11}+a_{22}^2 a_{23}^2=0
\end{split}
\end{equation*}

Note that $D_0$ is of second degree in the variable $a_{13}$ and its 
discriminant (with respect to this variable) $W=
16(3a_{12}a_{21}-3a_{21}a_{23}-2a_{11}a_{22}+a_{11}^2+a_{22}^2)^3$
is not a complete square. Thus, we conclude that $D_0$ is irreducible. Hence the
variety given by $\{D=0\}$ is the union of the variety given by
$\{D_0=0\}$ and the hyperplane $\{a_{11}=0\}$ taken with multiplicity $6$.

Since the hyperplane  $\{a_{11}=0\}$ is obviously not contained in  $\pi_\LL^{-1}(\C_\LL) \subset \M(2,3)$ we obtain that $\pi_\LL^{-1}(\C_\LL)$ is given by  $\{D_0=0\}$. 
\end{example}



%
%



\begin{ack}
The authors are grateful to J.~M.~Landsberg and T.~Ekedahl for relevant discussions and to R.~Fr\"oberg for help with some of the calculations.
\end{ack}

\section{Proofs}\label{s1}

\begin{proof}[Proof of Lemma~\ref{lm:int}]
This follows almost directly from homogeneity of $\M^1$. Indeed, take any matrix $0\neq A\in \M(m,n)$. Let $\tilde l \in \LL$ be its eigenvalue, that is a matrix from $\LL$ such that $A+\tilde l$ belongs to $\M^1$.  Notice that for any $\eps\in (0,1]$ the matrix $\eps \tilde l$ is the eigenvalue of the matrix $\eps A$. Considering the family of matrices $\eps A$ with $\eps\in [0,1]$ we conclude that the
total multiplicity of eigenvalues of the pencil $A+\LL$ coincides with that of the linear pencil $\LL$ if the latter multiplicity is finite, which gives the required statement.
\end{proof}

\begin{proof}[Proof of Theorem~\ref{th:Heine}]
To get the defining system of algebraic equations for $\E_\PP$ under the assumptions of Theorem~\ref{th:Heine} we proceed exactly as in \cite{He}.  For a given
upper-triangular matrix $A\in \M(m,n)$ with distinct entries on the main diagonal  we want to find  all $(n-m+1)$-tuples $(\la_1,\ldots,\la_{n-m+1})$ such that the matrix $A+\la_1J_1+\la_2J_2+\ldots+\la_{n-m+1}J_{n-m+1}$ has positive corank. Since $A$ is upper-triangular with distinct $a_{i,i}$ then in order to get  a positive corank it is necessary to require   $\la_1+a_{i,i}=0$  for some $i=1,\ldots,m$.  The next observation is that under the above assumptions on $A$ for any given $i=1,\ldots,m$ the total number  of  eigenvalues with $\la_1+a_{i,i}=0$ equals $\binom {n-i}{m-i}$ which gives the following  count of the eigenvalues of $A$ noticed already by Heine: $\binom{n}{m-1}=\binom{n-1}{m-1}+\binom{n-2}{m-2}+\ldots+\binom{n-m}{0}.$ Indeed, if $\la_1+a_{i,i}=0$ then $\la_1+a_{j,j}\neq 0$ for all $j\neq i$ and, in particular due to the assumptions on $A$ the first $i-1$ rows of $A-a_{i,i}J_1+\la_2J_2+\ldots+\la_{n-m+1}J_{n-m+1}$ are linearly independent for all  values of $\la_2,\ldots,\la_{n-m+1}$. On the other hand, the remaining rows $i$, $i+1,\ldots,m$ can become linearly dependent under an appropriate choice of $\la_2,\ldots,\la_{n-m+1}$. Since the matrix $A-a_{1,1}J_1$ is upper-triangular with the $(i,i)$-th entry vanishing the condition that
$A-a_{i,i}J_1+\la_2J_2+\ldots+\la_{n-m+1}J_{n-m+1}$  has positive corank is equivalent to the condition that the matrix obtained by removing its first $i$ rows and $i-1$ columns has positive corank. By Lemma~\ref{lm:int} the total number of eigenvalues of the matrix of the size $(m-i+1)\times (n-i)$ equals  $\binom {n-i}{m-i}$. Let us now for any given $i=1,\ldots,m$¾ derive a system of algebraic equations in the variables $\la_2,\ldots,\la_{n-m+1}$ whose solutions are exactly  all the eigenvalues of $A$ with $\la_1+a_{i,i}=0$.  We will concentrate on the case $i=1$ since all other cases are covered in exactly the same way by working with a smaller matrix obtained from $A$ by removing the first $(i-1)$ rows and $(i-1)$ columns. Using $(k_1,\ldots,k_m)$ for the coordinates of the left kernel and $\la_1,\la_2,\ldots,\la_{n-m+1}$ for the eigenvalues we get the following system of equations
$$\begin{cases}
0=k_1(a_{1,1}+\la_1)\\
0=k_1(a_{1,2}\la_2)+k_2(a_{2,2}+\la_1) \\
...................................................\\
0=k_1(a_{1,m}+\la_m)+k_2(a_{2,m}+\la_{m-1})+...+k_m(a_{m,m}+\la_1)\\
0=k_1(a_{1,m+1}+\la_{m+1})+k_2(a_{2,m+1}+\la_{m})+...+k_m(a_{m,m+1}+\la_2)\\
......................................................................................................\\
0=k_1(a_{1,m+1}+\la_{m+1})+k_2(a_{2,m+1}+\la_{m})+...+k_m(a_{m,m+1}+\la_2)\\
0=k_1(a_{1,n}+\la_{n})+k_2(a_{2,n}+\la_{n-1})+...+k_m(a_{n,n}+\la_{n-m+1})

\end{cases}
$$
expressing the existence of a nontrivial left kernel of $A+\la_1 J_1+\la_2J_2+\ldots+\la_{n-m+1}J_{n-m+1}$. (To simplify notations we assume here that  $\la_j=0$ for $j>n-m+1$.) In order to get the required system of equations in $\la_1,\ldots,\la_{n-m+1}$ we have to eliminate from the above system the variables $k_1,\ldots,k_m$. Notice that under our assumptions on $A$ the possible corank of $A+\la_1 J_1+\la_2J_2+\ldots+\la_{n-m+1}J_{n-m+1}$ can be at most $1$ and in the case of corank $1$ the linear dependence must necessarily include the first row, i.e., $k_1=1$. Note also that the first $m$ equations are triangular with respect to $k_1,\ldots,k_m$, which together with our assumptions on $A$ allows us to successfully eliminate them. Namely, from the first equation we get $\la_1=-a_{1,1}$ and $k_1=1$. Then for any $i=2,\ldots,m$ we solve the $i$-th equation with respect to $k_i$ and get
$$k_i=\frac{1}{a_{1,1}-a_{i,i}}\left(k_1(a_{1,i}+\la_i)+k_2(a_{2,i}+\la_{i-1})+\ldots+k_{i-1}(a_{i-1,i}+\la_2)\right).$$
With the initial  value $k_1=1$ and taking into account that the only  possible denominators occurring  in the above expressions for $k_i$ are $a_{1,1}-a_{i,i}$ we recurrently find all $k_i,\; i=1,\ldots,m$ as the functions of the matrix entries and $\la$'s. Substituting these found expressions in the remaining
$n-m$ equations we get the required system  of  algebraic equations to determine $\la_2,\ldots,\la_{n-m+1}$. (Notice that $\la_1=-a_{1,1}$ was already obtained from the first equation.)
\end{proof}

\begin{example} Any matrix $A\in \M(2,4)$ has  four  eigenvalues (counted with multiplicities) with respect to  the standard diagonal subspace $\fL$. If $A$ is upper-triangular with distinct elements on the first main diagonal then these eigenvalues split into two groups  depending on the value of $\la_1$. Namely, there are $3$ eigenvalues for which $\la_1=-a_{1,1}$ and $1$ eigenvalue for $\la_1=-a_{2,2}$. For $\la_1=-a_{1,1}$ the above system (before elimination) has the form:
$$\begin{cases} 0=k_1(a_{1,1}+\la_1)\\
                              0=k_1(a_{1,2}+\la_2)+k_2(a_{2,2}+\la_1)\\
                              0=k_1(a_{1,3}+\la_3)+k_2(a_{2,3}+\la_2)\\
                              0=k_1a_{1,4}+k_2(a_{2,4}+\la_3).
\end{cases}$$
From the first equation we get $k_1=1$ and $\la_1=-a_{1,1}$. From the second equation we get
$k_2=\frac{a_{1,2}+\la_2}{a_{1,1}-a_{2,2}}$. Substituting in the remaining two equations we get the next system to determine $\la_2$ and $\la_3$:
$$\begin{cases}
(\la_2+a_{1,2})(\la_2+a_{2,3})+(a_{1,1}-a_{2,2})(\la_3+a_{1,3})=0\\
(\la_2+a_{1,2})(\la_3+a_{2,4})+(a_{1,1}-a_{2,2})a_{1,4}=0.
\end{cases}
$$
In the case  $\la_1+a_{2,2}=0$ one gets a very simple linear system:
$$k_2(a_{2,2}+\la_1)=k_2(a_{2,3}+\la_2)=k_2(a_{2,4}+\la_3)=0$$
which gives $k_2=1,\;\la_1=-a_{2,2},\;\la_2=-a_{2,3},\;\la_3=-a_{2,4}.$
\end{example}

\begin{proof}[Proof of Theorem~\ref{th:determ}]
As we already mentioned in the introduction the set $\C_\LL$ can be determined as the set of all matrices $M \in \M^1$ such that the sum of the tangent space to $\M^1$ at $M$  and the linear space $\LL$ does not coincide with the whole $\M(m,n)$. Let us describe  a basis of the tangent space to $\M^1$ at a sufficiently generic matrix $M$. Since $GL_m\times  GL_n$ acts on $\M(m,n)$ with finitely many orbits the tangent space to the $GL_m\times  GL_n$-orbit of $M$ under this action coincides with the tangent space to $\M^1$ at $M$. Note that  $GL_m\times  GL_n$ acts on $\M(m,n)$ by elementary row and column operations. Thus, if we take for example the affine chart in which the determinant formed by the first $(m-1)$ rows and columns is non-vanishing then  the tangent space to $\M^1$ at any  matrix $M$ belonging to  this chart is generated by the following two groups of operations: (i) add to each column of $M$ one of its first $m-1$ columns and (ii) add to the last row of $M$ one of its other rows. One has therefore a total of $n(m-1)+(m-1)=(n+1)(m-1)=\dim \M^1$ generators.  Taking the wedge of these generators with the chosen basis of $\LL$ and representing an $(m\times n)$-matrix as a $mn$-vector by patching together its rows we obtain the following  $(mn\times mn)$-matrix that has a block structure of an $(m\times m)$-matrix with $(n\times n)$-blocks of the form given below:

$$\mathfrak D=\begin{pmatrix}
a_{1,1}I_n& a_{2,1}I_n& \cdots&a_{m-1,1}I_n&  -\sum_{j=1}^{m-1}k_ja_{j,1}\cdot I_n\\
a_{1,2}I_n& a_{2,2}I_n& \cdots&a_{m-1,2}I_n&   -\sum_{j=1}^{m-1}k_ja_{j,2}\cdot I_n\\
\vdots        & \vdots        & \ddots & \vdots         &  \vdots   \\
a_{1,m-1}I_n& a_{2,m-1}I_n& \cdots&a_{m-1,m-1}I_n&   -\sum_{j=1}^{m-1}k_ja_{j,m-1}\cdot I_n\\
0_{m-1,n}&0_{m-1,n}&\cdots &0_{m-1,n}& \A_{m-1,n}\\
L_{1,1}    &L_{2,1}    & \cdots & L_{m-1,1} & L_{m,1}\\
L_{1,2}    &L_{2,2}    & \cdots & L_{m-1,2} & L_{m,2}\\
\vdots        & \vdots        & \ddots & \vdots         &  \vdots   \\
L_{1,n-m+1}    &L_{2,n-m+1}    & \cdots & L_{m-1,n-m+1} & L_{m,n-m+1}\\
\end{pmatrix}.
$$
Here $\A=\A_{m-1,n}=(a_{i,j})$, $i=1,\ldots,m-1$, $j=1,\ldots,n$,  $I_n$ is the identity $(n\times n)$-matrix, $0_{m-1,n}$ is the $((m-1) \times n)$-matrix with all vanishing entries, and, finally, $L_{i,j}$ is the $i$-th row of the matrix $L_j$, see Theorem~\ref{th:determ}.  Notice that the determinant $\det(a_{i,j}I_n)$, $i=1,\ldots,m-1$, $j=1,\ldots,m-1$,  of the upper-left block of $\mathfrak D$ equals $\Delta^{m-1}$, where $\Delta=\det (a_{i,j})$, $i=1,\ldots,m-1$, $j=1,\ldots,m-1$, is the leftmost principal minor of $A_{m-1,n}$.   By the above assumption the matrix $\A$ lies in the affine chart where $\Delta\neq 0$.  Finally,  we clear the low-left block $(L_{i.j})$, $i=1,\ldots,m-1$, $j=1,\ldots,n-m+1$, of $\mathfrak D$ by ``killing'' all its elements  through row operations using the above upper-left block (which is a square and non-degenerate $((m-1)n\times (m-1)n)$-matrix) to obtain the low-right  block coinciding exactly  with the matrix in formula (\ref{eq:1}). Thus the determinant of the whole matrix $\mathfrak D$ equals the product between $\Delta^{m-1}$ and the determinant from Theorem~\ref{th:determ}. Since in the considered chart one has $\Delta\neq 0$ the result follows. 
\end{proof}

\begin{proof}[Proof of Lemma~\ref{l:top}]
Set $t=i+d$, so that $t\ge 2$. Since
$$\dim \HP(i,d)=\binom{i+d-1}{d}$$
we have to show that the polynomials constructed in the lemma are linearly independent, which we prove this by induction on $t$. Note that this is trivially true for $t=2$. Assume that it holds for some $t\ge 2$ and let $i,d$ be such that $i+d=t+1$. Suppose that $c_{\al\be}\in\bC$ are such that
$$\sum_{\al\in Q_{d,d}\atop \be\in Q_{d,i+d-1}}c_{\al\be}\Big|T_{i,d}(k_1,\ldots,k_i)[\al|\be]\Big|=0.$$
Clearly, this may be rewritten as
\begin{multline}\label{eq-lin}
\sum_{\al\in Q_{d,d}\atop \be\in Q_{d,i+d-2}}c_{\al\be}\Big|T_{i,d}(k_1,\ldots,k_i)[\al|\be]\Big|\\
+k_i\cdot\!\!\!\sum_{\al\in Q_{d-1,d-1}\atop \be\in Q_{d-1,i+d-2}}c_{\al\be}\Big|T_{i,d-1}(k_1,\ldots,k_i)[\al|\be]\Big|=0.
\end{multline}
In particular, setting $k_i=0$ we get
$$\sum_{\al\in Q_{d,d}\atop \be\in Q_{d,i+d-2}}c_{\al\be}\Big|T_{i-1,d}(k_1,\ldots,k_{i-1})[\al|\be]\Big|=0$$
hence
$c_{\al\be}=0$, $\al\in Q_{d,d}$, $\be\in Q_{d,i+d-2}$,
by the induction assumption since $(i-1)+d=t$. Together with \eqref{eq-lin} this implies that
$$\sum_{\al\in Q_{d-1,d-1}\atop \be\in Q_{d-1,i+d-2}}c_{\al\be}\Big|T_{i,d-1}(k_1,\ldots,k_i)[\al|\be]\Big|=0,$$
which in turn yields $c_{\al\be}=0$, $\al\in Q_{d-1,d-1}$, $\be\in Q_{d-1,i+d-2}$, again by the induction hypothesis since $i+(d-1)=t$. We conclude that
$c_{\al\be}=0$ for all $\al\in Q_{d,d}$ and $\be\in Q_{d,i+d-1}$, which proves the desired statement hence also the lemma.
\end{proof}

\begin{proof}[Proof of Theorem~\ref{th:sds}]
We will use the setting and notation of Lemma~\ref{l:top} with $i=m$ and $d=n-m+1$. Fix the sequence $\al=\{1,\ldots,m-1\}\in Q_{m-1,n}$. Now consider the left-hand side of the determinantal equation in
Theorem~\ref{th:determ} in the case when $\fL$ is the standard diagonal subspace and $L_j=J_j$, $1\le j\le n-m+1$. In view of the generalized Laplace expansion theorem, see, e.g., \cite[\S 2.4.11]{MM}, when expanding it by the rows $\al$ this left-hand side becomes
\begin{multline*}
(-1)^{\frac{m(m-1)}{2}}\sum_{\be\in Q_{m-1,n}}(-1)^{\rho(\be)}\Big|\A\big[\{1,\ldots,m-1\}|\be\big]\Big|\times\\
\Big|T_{m,n-m+1}(k_1,\ldots,k_m)\big[\{1,\ldots,n-m+1\}|\{1,\ldots,n\}\setminus\be\big]\Big|,
\end{multline*}
which proves the theorem.
\end{proof}



\section{Remarks and open questions}

\subsection*{A}
By analogy with the  above case, for a given triple $n,m,r$ one can   also consider $(m-r)(n-r)$-dimensional pencils of matrices in $\M(m,n)$ and study their intersections with the subvariety $\M^r$ of all matrices of corank at least $r$. In particular, a natural question is to find an analog of Theorem~\ref{th:determ} in this situation.

\subsection*{B}
It would  be interesting to determine the equation for $\pi^{-1}_\LL(\C_\LL)$ in general, see Example 3 in the Introduction. Another important direction is to determine the local multiplicity of a given eigenvalue in terms of the defining polynomial for $\C_\LL$.   Is there any analog of the Jordan normal form allowing to determine the multiplicity of a given eigenvalue?

\subsection*{C}
Notice that the left-right action of $GL_m\times GL_n$ extends from the space $\M(m,n)$ to every space of (in)complete flags in $\M(m,n)$. For simple dimensional reasons, in most cases this action cannot have finitely many orbits.

\begin{problem}
On which spaces of (in)complete flags  the above   left-right action of $GL_m\times  GL_n$ has finitely many orbits?
\end{problem}

\end{document}